\documentclass[12pt,reqno]{article}

\usepackage[usenames]{color}
\usepackage{amssymb}
\usepackage{amsmath}
\usepackage{amsthm}
\usepackage{amsfonts}
\usepackage{amscd}

\usepackage[colorlinks=true,
linkcolor=webgreen,
filecolor=webbrown,
citecolor=webgreen]{hyperref}

\definecolor{webgreen}{rgb}{0,.5,0}
\definecolor{webbrown}{rgb}{.6,0,0}

\usepackage{color}
\usepackage{fullpage}
\usepackage{float}
\usepackage{latexsym}
\usepackage{epsf}
\usepackage{breakurl}

\setlength{\textwidth}{6.5in}
\setlength{\oddsidemargin}{.1in}
\setlength{\evensidemargin}{.1in}
\setlength{\topmargin}{-.1in}
\setlength{\textheight}{8.4in}

\begin{document}

\theoremstyle{plain}
\newtheorem{theorem}{Theorem}
\newtheorem{corollary}[theorem]{Corollary}
\newtheorem{lemma}[theorem]{Lemma}
\newtheorem{proposition}[theorem]{Proposition}

\theoremstyle{definition}
\newtheorem{definition}[theorem]{Definition}
\newtheorem{example}[theorem]{Example}
\newtheorem{conjecture}[theorem]{Conjecture}

\theoremstyle{remark}
\newtheorem{remark}[theorem]{Remark}

\begin{center}
\vskip 1cm{\LARGE\bf 
Two new identities involving the Bernoulli numbers,
the Euler numbers, and the Stirling numbers of the first kind
}
\vskip 1cm
\large
Sumit Kumar Jha\\
International Institute of Information Technology\\
Hyderabad-500 032, India\\
\href{mailto:kumarjha.sumit@research.iiit.ac.in}{\tt kumarjha.sumit@research.iiit.ac.in}\\
\end{center}

\vskip .2 in

\begin{abstract}
We derive two new identities involving the Bernoulli numbers, the Euler numbers, and the Stirling numbers of the first kind using analytic continuation of a well known identity for the Stirling numbers of the first kind.
\end{abstract}
\section{Main results}
\begin{definition}
The \emph{Stirling numbers of the first kind}, denoted by $s(n,k)$, are the coefficients  in the expansion of the falling factorial into powers of the variable $x$:
\begin{equation}
\label{stirdef}
(x)_n = \sum_{k=0}^n s(n,k) x^k,
\end{equation}
where ${\displaystyle (x)_{n}=x(x-1)(x-2)\cdots (x-n+1)}$ denotes the falling factorial of variable $x$.
\end{definition}
\begin{definition}
The \emph{Bernoulli numbers} $B_{n}$ can be defined by the following generating function
\begin{equation*}
    \frac{t}{e^{t}-1}=\sum_{n\geq 0}\frac{B_{n}t^{n}}{n!},
\end{equation*}
where $|t|<2\pi$.
\end{definition}
\begin{definition}
The \emph{Euler numbers} are a sequence of integers, denoted by $E_{n}$, which can be defined by the following Taylor series expansion
\begin{equation*}
    {\displaystyle {\frac {1}{\cosh t}}={\frac {2}{e^{t}+e^{-t}}}=\sum _{n=0}^{\infty }{\frac {E_{n}}{n!}}\cdot t^{n}},
\end{equation*}
where $\cosh{t}$ is the hyperbolic cosine.
\end{definition}
We prove the following identities.
\begin{theorem}
For all non-negative integers $n$ we have
\begin{equation}
\label{first}
\sum_{k=0}^{n}s(n,k)B_{k}=\frac{(-1)^{n}n!}{n+1},
\end{equation}
\begin{equation}
\label{second}
\sum_{k=0}^{n}s(n,k)\frac{(2^{-k}-2)B_{k+1}}{k+1}=\frac{(-1)^{n}}{n+1}\cdot \frac{1\cdot 3 \cdot 5 \cdots (2n-1)}{2^{n+1}},
\end{equation}
and
\begin{equation}
\label{third}
\sum_{k=0}^{n}s(n,k)\cdot \frac{(1+2^{-(k+1)}(1-2^{-k}))B_{k+1}+4^{-(k+1)}(k+1)E_{k}}{k+1}=
\frac{(-1)^{n-1}}{4(n+1)}\cdot \left(\frac{3}{4}\right)^{(n)},
\end{equation}
where $x^{(n)}=x(x+1)(x+2)\cdots (x+n-1)$ denotes the rising factorial.
\end{theorem}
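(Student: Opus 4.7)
The plan is to derive all three identities from one generating-function master formula. Inserting the defining expansion (\ref{stirdef}) into the binomial series gives
\[
e^{x\log(1+t)}=(1+t)^x=\sum_{n\ge 0}(x)_n\frac{t^n}{n!}=\sum_{n\ge 0}\frac{t^n}{n!}\sum_{k=0}^n s(n,k)x^k,
\]
and also $(1+t)^x=\sum_k\frac{x^k(\log(1+t))^k}{k!}$. Matching coefficients of $x^k$ yields $\sum_{n\ge k}\frac{s(n,k)\,t^n}{n!}=\frac{(\log(1+t))^k}{k!}$, so for any sequence $f(k)$
\[
\sum_{n\ge 0}\frac{t^n}{n!}\sum_{k=0}^n s(n,k)f(k)=\sum_{k\ge 0}f(k)\frac{(\log(1+t))^k}{k!}.
\]
This is the ``analytic continuation'' device mentioned in the abstract: each identity is produced by a choice of $f$ for which the right-hand side has a tidy closed form.

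For (\ref{first}) I would take $f(k)=B_k$. With $u=\log(1+t)$ the right-hand side is $\tfrac{u}{e^u-1}=\tfrac{\log(1+t)}{t}=\sum_n\tfrac{(-1)^n t^n}{n+1}$, and matching with the $t^n/n!$ coefficients on the left recovers (\ref{first}).

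For (\ref{second}) the key preliminary is $\sum_{k\ge 0}\tfrac{B_{k+1}}{(k+1)!}u^k=\tfrac{1}{e^u-1}-\tfrac{1}{u}$. The coefficients in $f(k)=(2^{-k}-2)B_{k+1}/(k+1)$ are tailored so that a rescaling $u\mapsto u/2$ combined with this identity produces
\[
\Bigl(\tfrac{1}{e^{u/2}-1}-\tfrac{2}{u}\Bigr)-2\Bigl(\tfrac{1}{e^u-1}-\tfrac{1}{u}\Bigr)=\tfrac{1}{e^{u/2}+1},
\]
the divergent $1/u$ terms cancelling. Substituting $u=\log(1+t)$ gives $\tfrac{1}{\sqrt{1+t}+1}=\tfrac{\sqrt{1+t}-1}{t}$, and the binomial identity $\binom{1/2}{n+1}=\tfrac{(-1)^n(2n-1)!!}{2^{n+1}(n+1)!}$ yields the claimed right-hand side.

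For (\ref{third}) the same template applies, but $f(k)$ contributes four pieces: three Bernoulli sums at scales $1,\tfrac12,\tfrac14$ (by rescaling $u$ in the identity above) and one Euler sum $\sum_{k\ge 0}4^{-(k+1)}E_k\,u^k/k!=\tfrac{1}{4\cosh(u/4)}$. Setting $v=e^{u/4}$ converts the combined right-hand side into
\[
\tfrac{1}{v^4-1}+\tfrac{1}{2(v^2-1)}-\tfrac{1}{2(v-1)}+\tfrac{v}{2(v^2+1)}.
\]
The only real obstacle is combining these over the common denominator $(v-1)(v+1)(v^2+1)=v^4-1$ and checking that the numerator collapses to $-2(v-1)$, so the sum equals $-\tfrac{v-1}{v^4-1}=-\tfrac{(1+t)^{1/4}-1}{t}$ (using $v^4=1+t$). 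Expanding via $\binom{1/4}{n+1}=\tfrac{(-1)^n (3/4)^{(n)}}{4(n+1)!}$, an elementary manipulation of the falling factorial of $1/4$, then produces precisely the right-hand side of (\ref{third}).
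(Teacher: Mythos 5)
Your proposal is correct, and it is a genuinely different route from the paper's. The paper starts from $x^k=\frac{B_{k+1}(x+1)-B_{k+1}(x)}{k+1}$, telescopes to get the polynomial identity $n!\binom{y}{n+1}=\sum_{k=0}^n s(n,k)\frac{B_{k+1}(y)-B_{k+1}}{k+1}$, continues it to all real $y$, and then specializes to $y\to 0$, $y=1/2$, $y=1/4$, using the Gamma reflection formula together with the closed forms of $B_{k+1}(1/2)$ and $B_{k+1}(1/4)$ (the latter involving $E_k$) imported from Sun's paper. You instead work entirely with formal power series: the single identity $\sum_{n\ge k}s(n,k)t^n/n!=(\log(1+t))^k/k!$ turns each left-hand side into a generating function in $u=\log(1+t)$, which collapses to $\frac{\log(1+t)}{t}$, $\frac{(1+t)^{1/2}-1}{t}$, and $-\frac{(1+t)^{1/4}-1}{t}$ respectively, after which the binomial series does the rest. (Your ``numerator $-2(v-1)$'' is taken over the common denominator $2(v^4-1)$, consistent with your stated sum $-\frac{v-1}{v^4-1}$; I verified the partial-fraction collapse and the identity $\frac{1}{e^{u/2}-1}-\frac{2}{e^u-1}=\frac{1}{e^{u/2}+1}$.) What the paper's approach buys is a single uniform polynomial identity of which all three results are evaluations, with the special values of the Bernoulli polynomials explaining the exact shape of each summand; what your approach buys is self-containment --- no Bernoulli polynomials, no Gamma function, and no need to quote the values $B_{k+1}(1/2)$ and $B_{k+1}(1/4)$ --- at the cost of having to reverse-engineer the weight $f(k)$ for each identity and verify the cancellation of the $1/u$ poles by hand. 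Your method also makes the generalization pattern clear: any $e^{u/m}$ rescaling yields an analogous identity from the expansion of $(1+t)^{1/m}$.
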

\begin{proof}
The \emph{Bernoulli polynomials} $\{B_{n}(x)\}_{n=0}^{\infty}$ can be defined by the following expansion
\begin{equation}
\label{berndef}
\frac{t e^{xt}}{e^t-1}= \sum_{n=0}^\infty B_n(x) \frac{t^n}{n!}, \qquad \text{where $|t|<2\pi$}.
\end{equation} 
The Bernoulli polynomials satisfy \cite{Gould}
\begin{equation}
\label{recur}
\frac{B_{k+1}(x+1)-B_{k+1}(x)}{k+1}=x^{k}.
\end{equation}
Using the above equation, and writing the falling factorial in terms of the binomial coefficient, equation \eqref{stirdef} can be written as
$$
n!\cdot \binom{x}{n} = \sum_{k=0}^n s(n,k)\cdot \frac{B_{k+1}(x+1)-B_{k+1}(x)}{k+1}.
$$
Writting $\binom{x}{n}=\binom{x+1}{n+1}-\binom{x}{n}$, and summing on both sides from $x=0$ to $y-1$ gives
$$
n!\cdot \binom{y}{n+1} = \sum_{k=0}^n s(n,k)\cdot \frac{B_{k+1}(y)-B_{k+1}}{k+1}.
$$
Writing the binomial coefficient on the left in terms of the Gamma function we obtain
$$
\frac{1}{n+1}\cdot \frac{\Gamma(y+1)}{\Gamma(y-n)}= \sum_{k=0}^n s(n,k)\cdot \frac{B_{k+1}(y)-B_{k+1}}{k+1}.
$$
Writing $\frac{1}{\Gamma(y-n)}=\frac{\sin(y-n)\pi}{\pi}\cdot \Gamma(n+1-y)$ we have
\begin{equation}
\label{main}
\frac{1}{n+1}\cdot \Gamma(y+1) \cdot \frac{\Gamma(n+1-y)}{\pi}= \sum_{k=0}^n s(n,k)\cdot \frac{B_{k+1}(y)-B_{k+1}}{k+1}\cdot \frac{1}{\sin(y-n)\pi}.
\end{equation}
Letting $y\rightarrow 0$ on both sides of the above equation we get
$$
\frac{n!}{n+1}\cdot \frac{1}{\pi}=\lim_{y\rightarrow 0}\sum_{k=0}^{n}s(n,k)\frac{B_{k}(y)}{\cos(y-n)\pi\cdot \pi}
$$
which gives us equation \eqref{first} after the fact that $B_{k}(0)=B_{k}$ \cite{Sun}.\par 
Substituting $y=1/2$ in the equation \eqref{main} we obtain
$$
\frac{(-1)^{n}}{n+1}\Gamma(3/2)\cdot \frac{\Gamma(1/2+n)}{\pi}= \sum_{k=0}^n s(n,k)\cdot \frac{B_{k+1}(1/2)-B_{k+1}}{k+1}.
$$
The left side of the above equation can be evaluated as
\begin{align*}
\frac{(-1)^{n}}{n+1}\Gamma(3/2)\cdot \frac{\Gamma(1/2+n)}{\pi}
=\Gamma^{2}(1/2)\cdot  \frac{1\cdot 3 \cdot 5 \cdots (2n-1)}{2^{n+1}}\cdot \frac{(-1)^{n}}{\pi(n+1)}.
\end{align*}
Recalling that $B_{k+1}(1/2)=(2^{-k}-1)B_{k+1}$ \cite{Sun} we obtain our second result equation \eqref{second}.\par 
Substituting $y=1/4$ in the equation \eqref{main} gives us
$$
\frac{1}{n+1}\cdot \Gamma(5/4)\cdot \frac{\Gamma(n+3/4)}{\pi}=\sum_{k=0}^n s(n,k)\cdot \frac{B_{k+1}(1/4)-B_{k+1}}{k+1}\cdot (-1)^{n}\cdot\sqrt{2}.
$$
The left side of the above equation can be evaluated as
\begin{align*}
\frac{1}{\pi(n+1)}\cdot \Gamma(5/4)\cdot \Gamma(n+3/4)\\
=\frac{1}{\pi(n+1)}\cdot \left(\frac{3}{4}\right)^{(n)}\cdot \Gamma(3/4)\cdot \Gamma(5/4)\\
=\frac{1}{\pi(n+1)}\cdot \left(\frac{3}{4}\right)^{(n)}\cdot \frac{\pi}{2\sqrt{2}}.
\end{align*}
The above allows us to deduce equation \eqref{third} after the fact \cite{Sun} that
\begin{align*}
\frac{B_{k+1}(1/4)-B_{k+1}}{k+1}=\frac{(-2^{-(k+1)}(1-2^{-k})B_{k+1}-4^{-(k+1)}(k+1)E_{k}-B_{k+1})}{k+1}\\
=-\frac{(1+2^{-(k+1)}(1-2^{-k}))B_{k+1}+4^{-(k+1)}(k+1)E_{k}}{k+1}.
\end{align*}
\end{proof}

\end{document}